\documentclass[11pt,twoside]{amsart}
\usepackage{mathtools,amsmath,amssymb}
\usepackage{mathrsfs}

\usepackage{libertine}       
\usepackage[a4paper,top=1in, bottom=1in, left=1in, right=1in]{geometry} 
\usepackage{graphicx} 
\usepackage[colorlinks=true,linkcolor=blue,citecolor=black]{hyperref} 
\usepackage{cleveref}
\usepackage[backend=biber,backref=true,doi=false,url=false,maxnames=9]{biblatex} 
\renewbibmacro*{pageref}{%
  \iflistundef{pageref}
    {}
    {\addspace
     \mkbibbrackets{\printlist[pageref][-\value{listtotal}]{pageref}}}} 
\usepackage{lipsum}
\usepackage{adjustbox} 
\usepackage{tikz-cd}
\usepackage{mathptmx}
\usepackage{quiver}
\usepackage{float}
\usepackage{amssymb}
\usepackage[english]{babel}
\usepackage{amsthm}
\usepackage{amsmath}
\usepackage{mathtools}
\usepackage{soul}
\usepackage{nicematrix}
\usepackage{array}   
\usepackage{booktabs}
\usepackage{float}
\usepackage{orcidlink}
\usepackage{nomencl}

\makenomenclature
\usepackage[toc,page]{appendix}
\usepackage{caption}
\usepackage{mathrsfs}     
\newcommand{\GL}{\mathrm{GL}}
\newcommand{\M}{\mathrm{M}}

\newcommand{\Z}{\mathbb{Z}}

\newcommand{\R}{\mathscr{O}}

\newcommand{\Zen}{\mathscr{Z}}
\renewcommand{\O}{\mathscr{O}}
\newcommand{\is}{\mathrm{iso}}
\newcommand{\Min}{\mathrm{Min}}
\usepackage{thmtools, thm-restate}
\declaretheorem{theorem}
\newtheorem{lemma}{Lemma}[section]
\newtheorem{proposition}[lemma]{Proposition}
\newtheorem{corollary}[lemma]{Corollary}
\theoremstyle{definition} 

\newtheorem{example}[lemma]{Example}
\newtheorem{remark}[lemma]{Remark}
\newtheorem{question}{Question}
\title[Lifting solutions of polynomial equations on matrices]{Lifting solutions of polynomial equations on matrices over field to complete local principal ideal rings}
\date{\today}
\author[Panja]{Saikat Panja\orcidlink{0000-0002-9639-3122}}
\email[(Panja)]{panjasaikat300@gmail.com}
\address{Indian Statistical Institute, Bengaluru Centre, 8th Mile, Mysore Rd, RVCE Post, Gnana Bharathi, Bengaluru, Karnataka 560059, India}
\author[Roy]{Ayon Roy}
\email[(Roy)]{ayonroy1999@gmail.com}
\address{Indian Institute of Science Education and Research Pune, Dr. Homi Bhabha Road, Pashan, Pune 411 008, India}
\author[Singh]{Anupam Singh}
\email[(Singh)]{anupamk18@gmail.com}
\address{Indian Institute of Science Education and Research Pune, Dr. Homi Bhabha Road, Pashan, Pune 411 008, India}
\thanks{Panja is supported by an NBHM postdoctoral fellowship, file number ending at R\&D-II/6746. Roy is supported by an IISER Pune PhD Fellowship. Singh is supported by an ANRF-MATRICS grant.}
\date{\today}
\subjclass[2020]{15B33,16S50,16S85}
\keywords{polynomial maps, matrix algebras, local rings, lifting problem}
\begin{document}
\maketitle
\begin{abstract}
Let $\widehat{\mathscr O}$ be a complete local principal ideal ring with residue field $k$ of characteristic not $2$ and $f\in \widehat{\mathscr O}[x_1,x_2,\dots,x_m]$. Take $A\in \mathrm M_n(\widehat{\mathscr O})$ with its reduction $\overline{A}\in \mathrm M_n(k)$.
In this article, we study the following lifting problem.
Suppose there exists a tuple
$(\widetilde{B}_1, \widetilde{B}_2, \dots,\widetilde{B}_m)\in \mathrm M_n(k)^m$ of pairwise commuting matrices such that
$f(\widetilde{B}_1, \widetilde{B}_2, \dots,\widetilde{B}_m) = \overline{A}$; under what conditions can this solution be lifted to a tuple
$(B_1,B_2,\dots,B_m)\in \mathrm M_n(\widehat{\mathscr O})^m$ of pairwise commuting matrices satisfying
$f(B_1,B_2,\dots,B_m)=A$ ?
For $\overline{A}$ regular semisimple, we show that, under suitable hypotheses analogous to those appearing in Hensel’s lemma, such a lifting is always possible.
\end{abstract}
\section{Introduction}\label{sec:intro}

Lifting phenomena occur across many areas of mathematics, such as algebraic geometry, number theory, and representation theory.
For example, the Oort conjecture concerns the lifting of a smooth curve in characteristic $p$, together with its group of automorphisms, to a curve, along with the automorphisms, to characteristic zero; see, for example, \cite{ObusWewers2014,Pop2014}.
Another instance arises in the lifting of packets of representations of $G(k)$ to packets for $\widetilde{G}(k)$, where $\widetilde{G}$ is a connected reductive group over a finite field $k$, and $G$ denotes the connected component of the group of $\varepsilon$-fixed points of $\widetilde{G}$. Here $\varepsilon$ is a semisimple $k$-automorphism of $\widetilde{G}$ of finite order; see, for example,
\cite{AdlerLansky2014,AdlerCasselLansky2016,AdlerLansky2023}.
The lifting of a two-dimensional continuous representation
$\overline{\rho}\colon G_{\mathbb Q}\rightarrow \mathrm{GL}_2(k)$
of the absolute Galois group of $\mathbb Q$ to a representation
$\rho\colon G_{\mathbb Q}\rightarrow \mathrm{GL}_2\bigl(W(k)\bigr)$
that is unramified outside a finite set of primes has been studied in \cite{Ravi1999}, under the assumption that $\overline{\rho}$ is odd and absolutely irreducible. Here $k$ is a finite field of characteristic $p$, and $W(k)$ denotes its ring of Witt vectors.

The classical Hensel's Lemma concerns lifting a factorization modulo a prime number $p$ of a polynomial $f$ over the $\Z$ to a factorization modulo $p^m$ for any integer $m\geq 1$, and to a factorization over the $p$-adic integers. This can be generalized easily to the case where the integers are replaced by any commutative ring, the prime number is replaced by a maximal ideal, and the $p$-adic integers are replaced by the completion with respect to the maximal ideal. However, lifting a solution for equations over a non-commutative ring remains relatively unexplored, mainly because it may not always have a positive solution. Take, for example, the ring $\R=\Z/9\Z$ and 
$A=\begin{pmatrix}5&0\\0&2\end{pmatrix}\in\M_2(\R),\,L=2.$
Letting $\Phi_2:\M_2(\R_2)\longrightarrow\M_2(\R_2)$, the squaring map, we have 
\begin{align*}
\overline{\Phi}_2\!\left(\begin{pmatrix}0&1\\2&0\end{pmatrix}\right)=\overline{A}\in\M_2(k),
\end{align*}
and
\begin{align*}
\overline{\Phi}_2^{-1}(\overline{A})
=\left\{
\begin{pmatrix}0&2\\1&0\end{pmatrix},
\begin{pmatrix}0&1\\2&0\end{pmatrix},
\begin{pmatrix}1&1\\1&2\end{pmatrix},
\begin{pmatrix}1&2\\2&2\end{pmatrix}
\right\}.
\end{align*}
However, there is no $B\in\M_2(\R)$ such that $B^2=A$; see \cite[Example 6.1]{PRS2025}.
Our motivation for investigating the lifting problem for solutions of polynomial equations on matrix algebra over a field to matrix algebra over a complete local principal ideal ring comes from the growing interest in understanding polynomial maps on algebras {(for example \cite{KanelMalevRowen2012,Bresar2020,BresarVolci2025})} and various conjectures in the subject. Let us briefly recall the concept of polynomial maps on algebras here; let $\mathscr R$ be a ring with unity, then given an $\mathscr R$-algebra $\mathscr A$, and an element $f\in \mathscr R[x_1,x_2,\ldots,x_n]$ (the free $\mathscr R$-algebra on $n$ generators), one gets a map
\begin{align*}
    \widetilde{f}\colon \mathscr A^n\rightarrow\mathscr A,\,(a_1,a_2,\ldots,a_n)\mapsto f(a_1,a_2,\ldots,a_n),
\end{align*}
known as \emph{polynomial map}. 
In recent years, many interesting results have been proven in this area. As the literature is rapidly expanding, we mention only a few representative instances, while acknowledging that several fundamental contributions remain unmentioned.

A positive solution to the L'vov–Kaplansky conjecture for $\M_2(k)$, where $k$ is a quadratically closed field, was obtained in \cite{KanelMalevRowen2012}.
The images of general polynomials on upper triangular matrices were determined in \cite{GargateMello2022,PanjaPrasad2023}.
The Waring problem for matrices has been studied in
\cite{BresarSemrl2023,BresarSemrl2023b,KishoreSingh2025,PanjaSainiSingh2025Surj}.
A surprising connection between commutators in matrix rings and images of noncommutative polynomials on matrix rings was established in \cite{Bresar2020}.
More general investigations involving rational functions were introduced in \cite{BresarVolci2025}.
Finally, polynomial maps with constants taken from the algebra itself were studied in
\cite{PanjaSainiSingh2025constant,PanjaSainiSingh2025splitOctonion}.

We consider a natural question as follows: let $\R$ be a local ring and consider the algebras $\R[x_1,x_2\cdots,x_n]$ of $n$-variable polynomials and $\M_r(\R)$ of $r\times r$ matrices with entries from $\R$. Let $\mathfrak m$ be the maximal ideal of $\R$. Note that for a polynomial map $f:\M_r(\R)^n\longrightarrow\M_r(\R)$, we get a natural map $\overline{f}:\M_r(k)^n\longrightarrow\M_r(k)$, which further produces a commutative diagram taking into account the canonical map $\theta:\R\longrightarrow k$
\[\begin{tikzcd}
	{\M_r(\R)^n} & {\M_r(\R)} \\
	{\M_r(k)^n} & {\M_r(k)}
	\arrow["f", from=1-1, to=1-2]
	\arrow["\theta"', from=1-1, to=2-1]
	\arrow["\theta", from=1-2, to=2-2]
	\arrow["{\overline{f}}"', from=2-1, to=2-2].
\end{tikzcd}\]
Then it is natural to ask the following question:
\begin{question}
    Let $A\in\M_r(\R)$ and $f\in\R[x_1,x_2,\cdots,x_n]$. Suppose there exists a solution $(C_1,C_2,\ldots,C_n)\in\M_r(k)^n$ such that $\overline{f}(C_1,C_2,\ldots,C_n)=\overline{A}$. Does there exist $(B_1,B_2,\ldots,B_n)\in\M_r(\R)^n$ such that $\theta({B_i})=C_i$ for all $1\leq i\leq n$ and $f(B_1,B_2,\cdots,B_n)=A$?
\end{question}
Of course, such a lifting is not always possible, as described in the discussed example. We give sufficient conditions, both on the matrix $A$ and the tuple $(B_1, B_2,\ldots,B_n)$, so that the solution can be lifted; this is discussed in \Cref{lift-length two} and \Cref{thm:n-var} in the next section.
\subsection{Notations}
Let us fix some notation.
A commutative ring $\mathscr{R}$ with unity is said to be \emph{complete with respect to an ideal $I$} if the canonical map
$\mathscr{R}\longrightarrow \varprojlim_{j\geq 1}\mathscr{R}/I^j\mathscr{R}$
is an isomorphism.
We denote by $\widehat{\O}$ a local principal ideal ring that is complete with respect to its unique maximal ideal $\mathfrak{m}=(\pi)$.
Its residue field $k$ has characteristic not equal to $2$.
For $\ell\ge 1$, the notation $\O_\ell$ is reserved for the ring
$\O_\ell=\widehat{\O}/\pi^{\ell}\widehat{\O}$.
This is a local principal ideal ring of length $\ell$ with unique maximal ideal $\mathfrak{m}_\ell=(\pi_\ell)$, where $\pi_\ell=\pi+(\pi^\ell)$.
The canonical map $\O_{\ell+1}\longrightarrow \O_\ell$ is denoted by $\theta_\ell$.
The natural surjection $\widehat{\O}\rightarrow \O_\ell$ is denoted by $\widehat{\theta}_\ell$.
If $\widehat{\theta}_{\ell+1}(a)=a_{\ell+1}$ for some $a\in\widehat{\O}$, then we write
$a_\ell:=\theta_\ell(a_{\ell+1})\in \O_\ell$.
Thus,
$
\theta_\ell\circ \widehat{\theta}_{\ell+1}=\widehat{\theta}_\ell.
$
From now on, all reductions from $\widehat{\O}$ to the level $\O_\ell$—for elements, matrices, polynomials, and so on—will be denoted uniformly by $\widehat{\theta}_\ell$.
For instance, if $H(t)\in \widehat{\O}[t]$, then
$
\widehat{\theta}_\ell(H(t))=H_\ell(t).
$
In particular, $H_1(t)$ is denoted by $\overline{H}(t)$ to emphasize that it is a polynomial with coefficients in the residue field $k$.
We adopt this notation throughout.
Also by abuse of notations, $\theta$ denotes the map to the field $k$ from the local ring $\R_\ell$ for any $\ell$.
An analogous convention applies to matrices; for example, $\overline{X}=X_1$.
For a commutative ring $\mathscr{R}$ with unity and a matrix $A\in\M_n(\mathscr{R})$, we denote by $\mathscr{Z}_{\M_n(\mathscr{R})}(A)$ the centralizer algebra of $A$.
For a matrix $A\in \M_n(\mathscr R)$, we define $N_A^{\mathscr R}=\{F(t)\in\mathscr R[t]:F(A)=0\}$, the \emph{null ideal of $A$}; see \cite[Section 3.1]{PRS2025} for further details.
A matrix $A\in\M_n(\mathscr R)$ is called \emph{cyclic} if $\overline{A}\in\M_n(k)$ is cyclic. 
Recall that a matrix $X\in\M_n(k)$ is cyclic if the $k[t]$ module $k^n$ via the action of $X$ is a cyclic module. One also has the following: if $A\in\M_n(\R)$ be such that $\overline{A}\in\M_n(k)$ is cyclic, the $\R[t]$ module $\R^n$ via the action of $A$ is a cyclic module as well; see \cite[Lemma 5.2]{PRS2025}.

\color{black}

\section{Lifting solutions of polynomial maps}\label{sec:finite-lifting}

In the article \cite{PRS2025}, the authors produced a strategy to lift a solution of $X^L=\Bar{A}$ over residue field level to the solution of the given equation $X^L=A$ over local ring level for $\gcd(L,p)=1$; where $p$ is the characteristic of the residue field which is an odd prime, and $A$ is cyclic elements of $\M_n(\R_2)$. 
They used a single variable monic polynomial evaluated at a matrix variable with coefficients from $\R_2$ and proved the following proposition.

\begin{proposition}\label{prop:hens-root-full}
Let $\R_2$ be a finite local principal ideal ring of length two with maximal ideal $\mathfrak m= \langle \pi \rangle$ and residue field $k$ of characteristic $p>2$. 
Let $A\in \M_n(\R_2)$ be cyclic, and let $F(t)\in \R_2[t]$ be monic of degree $d$. Suppose there exists $\widetilde{B} \in \M_n(k)$ such that $
\overline{F}(\widetilde{B}) = \overline{A}$ and 
$\overline{F}'(\widetilde{B})\in \GL_n(k)$ where $F'(t)$ is the formal derivative of $F(t)$ in $\R_2[t]$. 
Then there exists $B\in \M_n(\R_2)$ with $\overline{B}=\widetilde{B}$ and $F(B)=A$.
\end{proposition}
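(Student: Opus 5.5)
Since $A$ is cyclic, I will look for the lift inside the centralizer of $A$, which is well understood. By \cite[Lemma 5.2]{PRS2025}, $\R_2^n$ is a cyclic $\R_2[t]$-module via $A$, so $\Zen_{\M_n(\R_2)}(A)=\R_2[A]$. Likewise $\overline{A}$ is cyclic, so $\Zen_{\M_n(k)}(\overline{A})=k[\overline{A}]$. The matrix $\widetilde{B}$ commutes with $\overline{F}(\widetilde{B})=\overline{A}$, hence $\widetilde{B}=\overline{g}(\overline{A})$ for some $\overline{g}\in k[t]$. The plan is therefore to search for $B$ of the form $g(A)$ with $g\in\R_2[t]$. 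Every such matrix commutes with $A$, so the problem reduces to a computation in the commutative ring $R:=\R_2[A]\cong \R_2[t]/N_A^{\R_2}$, where Hensel's method applies.

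First fix an arbitrary lift $g_0\in\R_2[t]$ of $\overline{g}$ and set $B_0=g_0(A)\in R$. Then $\overline{B_0}=\widetilde{B}$ and
\[
F(B_0)-A\in \pi\,\M_n(\R_2)\cap R .
\]
I need this difference to lie in $\pi R$, so that it can be written as $\pi\,h(A)$ with $h\in\R_2[t]$. \textbf{This is the main obstacle}: in general $\pi\M_n(\R_2)\cap R$ can be larger than $\pi R$. I would handle it using cyclicity again. Choose a cyclic vector $v$ for $A$; then $v,Av,\dots,A^{n-1}v$ is an $\R_2$-basis of $\R_2^n$ (the reduction is a basis, so by Nakayama this is a basis and $R$ is free with basis $1,A,\dots,A^{n-1}$). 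Hence $R$ is a free $\R_2$-module, and an element $\sum c_iA^i$ of $R$ is divisible by $\pi$ in $\M_n(\R_2)$ if and only if its action on $v$ is, if and only if every $c_i\in\pi\R_2$. This gives $\pi\M_n(\R_2)\cap R=\pi R$.

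Now make the Newton step with $B=B_0+\pi C$, $C=c(A)\in R$. Since $R$ is commutative and $\pi^2=0$, Taylor expansion gives
\[
F(B_0+\pi C)=F(B_0)+\pi F'(B_0)C .
\]
So it suffices to solve $\pi F'(B_0)C=-\pi h(A)$, and for this it is enough to solve $\overline{F}'(\widetilde{B})\,\overline{C}=-\overline{h}(\overline{A})$ in $k[\overline{A}]$. This is exactly where the Jacobian-type hypothesis is used. The element $\overline{F}'(\widetilde{B})$ lies in the finite-dimensional commutative algebra $k[\overline{A}]$ and is invertible in $\M_n(k)$. Its inverse is a polynomial in it (Cayley–Hamilton), so the inverse lies in $k[\overline{A}]$. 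Take $\overline{C}=-\overline{F}'(\widetilde{B})^{-1}\overline{h}(\overline{A})$ and lift it to any $C\in R$. Only $\overline{C}$ matters, since $\pi\cdot\pi=0$.

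Then $F(B)=A$ and $\overline{B}=\widetilde{B}$. Finally I would note that the hypothesis $p>2$ is not needed in this length-two argument. It is presumably used elsewhere in the paper, for example for the iterated lifting to $\widehat{\O}$.
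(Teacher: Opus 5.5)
Your argument is correct and is essentially the route the paper takes: although this proposition is quoted from \cite{PRS2025} rather than proved here, the inductive step of \Cref{lift-length two} (and the $\ell=2$ case of \Cref{thm:n-var}) runs the same scheme. That scheme passes to $\Zen_{\M_n(\R_2)}(A)\cong\R_2[A]$, lifts $\widetilde{B}$ to some $\check{B}\in\R_2[A]$, writes $F(\check{B})=A+\pi C$ with $C\in\R_2[A]$, and corrects by $\pi D$ with $D=-C\,F'(\check{B})^{-1}$. Your freeness argument giving $\pi\M_n(\R_2)\cap\R_2[A]=\pi\,\R_2[A]$ makes explicit a step the paper only asserts (as $\ker(\theta_m)=\mathfrak{n}_{m+1}[A_{m+1}]$), and you are right that $p>2$ plays no role at this length-two stage.
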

This gives a general setup for lifting solutions of such matrix equations from the residue field to the length two level. 
But now we want to see it in some bigger context. 
First, let us investigate for a given matrix equation $F(X)=A$; $A\in M_n(\mathcal{O}_{\ell})$ be cyclic; whether a solution of the reduced equation from length two level (if exists) can be lifted to length $3$ level and so on for a given matrix $A\in M_n(\mathscr{O}_{\ell})$. 
We start with $\mathscr{O}_{\ell}=\mathbb{Z}/p^{\ell}\mathbb{Z}$; where $p$ is an odd prime. 
Note that the ring of $p$-adic integers can be written as $\mathbb{Z}_p=\varprojlim \mathbb{Z}/p^{\ell}\mathbb{Z}$.
In the construction of the lifting strategy, the null ideal played a crucial role. 
Therefore, let us prove the results regarding null ideals and uniqueness of minimal polynomials for a cyclic element $A\in \GL_n(\mathcal{O}_3)$ using the information of $\mathcal{O}_2$ level.

\[\begin{tikzcd}
	{\mathbb{Z}/p^{3}\mathbb{Z}} & {\mathbb{Z}/p^{2}\mathbb{Z}} \\
	& {\mathbb{F}_p}
	\arrow["{\theta_{2}}", from=1-1, to=1-2]
	\arrow["{\theta_1\circ \theta_2}"', from=1-1, to=2-2]
	\arrow["{\theta_1}", from=1-2, to=2-2]
\end{tikzcd}\]

Let $F(t)\in \mathscr{O}_3[t]$ be such that $F(t)\in N_A^{\R_3}=\{G\in \O_3[t]:G(A)=0\}$. 
Then $F(t)=H(t)\chi(A)(t)+R(t)$ where $R(t)=0$ or $\deg(R(t))<\deg (\chi(A)(t))$. If possible, let $R(t)\neq 0$.
If at least one of the coefficients of $R(t)$ is a unit in $\mathscr{O}_3$, it contradicts the minimality of $\chi(\Bar{A})(t)$ in $k[t]$.
Therefore $R(t)\in \mathfrak{m}[t]$. 
Note that $\ker (\theta_{2})=p^2\mathbb{Z}/p^3\mathbb{Z}$. Now we can always write $R(t)=p S(t)$ with a leading coefficient that is not congruent to $0$ $\pmod{p^2}$. 
Therefore $\deg(R(t))=\deg(S(t))$.
As $R(A)=0$ implies $S(A)=p^2Q$ for some $Q\in \M_2(\mathscr{O}_3)$ therefore $\theta_{2}(S(A))=\mathbf{0}$; consequently $\theta_{2}(S)\in N^{\R_2}_{\theta_{2}(A)}$.
Now $N^{\R_2}_{\theta_{2}(A)}=\langle \chi(\theta_{2}(A)) \rangle$ by \cite[Lemma 3.1]{PRS2025}.
Note that $$\deg (\theta_{2}(S))=\deg(S)=\deg(R)<\deg(\chi(A))=\deg(\chi(\theta_{2}(A))),$$
which is a contradiction. Hence, $R(t)$ must be the zero polynomial.
Let us now prove the statement in generality we seek to apply to prove the main theorems.
\begin{proposition}{\label{Null ideal}}
Let $\widehat{\mathscr{O}}$ be a local principal ideal ring, complete with respect to its maximal ideal $\mathfrak{m}=(\pi)$, and its residue field $k$ has characteristic different from $2$.
Let $A\in \GL_n(\widehat{\mathscr{O}})$.
Assume that there exists a monic polynomial $F(t)\in N_A^{\widehat{\R}}$ such that
\begin{align*}
\deg\bigl(\overline{F}(t)\bigr)=\deg\bigl(\Min_{k,\overline{A}}(t)\bigr).
\end{align*}
Then, for every $i\in\mathbb{N}$, the null ideal $N^{\R_{i+1}}_{A_{i+1}}$ is principal and is generated by $F_{i+1}(t)$, where
\begin{align*}
A_i=\widehat{\theta}_i(A), \qquad
F_i(t)=\widehat{\theta}_i\bigl(F(t)\bigr), \qquad
\mathscr{O}_i=\widehat{\mathscr{O}}/\pi^i\widehat{\mathscr{O}}.    
\end{align*}
\end{proposition}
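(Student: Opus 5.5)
Write $d=\deg \overline{F}=\deg \Min_{k,\overline{A}}$. Since $F$ is monic, $\deg F=d$, and so $\deg F_{i+1}=d$ for every $i$. Moreover $F(A)=0$ gives $F_{i+1}(A_{i+1})=0$, so $\langle F_{i+1}\rangle\subseteq N^{\R_{i+1}}_{A_{i+1}}$. The content of the statement is the reverse inclusion. I would prove it by induction on the length $j=i+1\ge 1$, following the computation carried out above for $\mathbb{Z}/p^3\mathbb{Z}$.

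\textbf{Base case $j=1$.} Here $\overline{F}(\overline{A})=0$ and $\overline{F}$ is monic of degree $d=\deg\Min_{k,\overline{A}}$. Since $\Min_{k,\overline{A}}$ divides $\overline{F}$ and both are monic of the same degree, $\overline{F}=\Min_{k,\overline{A}}$. Over the field $k$ the null ideal is generated by the minimal polynomial, so $N^{k}_{\overline{A}}=\langle \overline{F}\rangle$. (If one prefers to start at length two, this is the analogue of \cite[Lemma 3.1]{PRS2025}.)

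\textbf{Inductive step.} Assume $N^{\R_j}_{A_j}=\langle F_j\rangle$, and take $G\in N^{\R_{j+1}}_{A_{j+1}}$. Because $F_{j+1}$ is monic, division with remainder in $\R_{j+1}[t]$ gives $G=HF_{j+1}+R$ with $R=0$ or $\deg R<d$, and $R(A_{j+1})=0$. Suppose $R\neq0$.

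First, if some coefficient of $R$ were a unit, then $\overline{R}$ would be a nonzero polynomial of degree $<d$ annihilating $\overline{A}$, contradicting the minimality of $\Min_{k,\overline{A}}=\overline{F}$. So all coefficients of $R$ lie in $\mathfrak m_{j+1}$.

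Next, let $s\ge1$ be the largest integer with $R\in \pi^s\R_{j+1}[t]$; then $s\le j$. Write $R=\pi^s S$, where some coefficient of $S$ is a unit, and note $\deg S\le \deg R<d$. From $\pi^s S(A_{j+1})=0$ we get that the entries of $S(A_{j+1})$ lie in the annihilator of $\pi^s$ in $\R_{j+1}$, which is $\pi^{j+1-s}\R_{j+1}$. Two cases follow.
\begin{itemize}
\item If $s=j$, then $S(A_{j+1})\in \pi\M_n(\R_{j+1})$, so $\overline{S}(\overline{A})=0$. But $\overline{S}$ is nonzero of degree $<d$, a contradiction.
\item If $s<j$, reduce modulo $\pi^{j+1-s}$, using that $\R_{j+1}/\pi^{j+1-s}\cong\R_{j+1-s}$ with $1\le j+1-s\le j$. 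The reduction of $S$ annihilates $A_{j+1-s}$. By the induction hypothesis (applied at all lengths $\le j$, so I would phrase the induction as a strong induction), this reduction is a multiple of the monic degree-$d$ polynomial $F_{j+1-s}$. A nonzero multiple of a monic polynomial has degree $\ge d$, while the reduction is nonzero (it has a unit coefficient) of degree $<d$. This is a contradiction.
\end{itemize}
Hence $R=0$ and $G\in\langle F_{j+1}\rangle$, which completes the induction.

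\textbf{Main obstacle.} The delicate point is the annihilator step: it needs $\mathrm{Ann}(\pi^s)=\pi^{j+1-s}\R_{j+1}$ in $\R_{j+1}$. This requires $\pi$ not to be nilpotent of smaller order in $\widehat{\O}$, i.e.\ $\R_{j+1}$ has genuine length $j+1$. In the degenerate case where $\widehat{\O}$ is Artinian, so that $\pi^N=0$, the tower stabilizes and the same argument applies with the obvious modification. One must also check that the leading-coefficient and degree bookkeeping survives reduction; this is automatic here because we work with a unit coefficient rather than the leading one. The hypotheses $A\in\GL_n$ and $\operatorname{char}k\neq2$ appear not to be needed for this step.
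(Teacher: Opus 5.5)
Your proof is correct and follows essentially the same route as the paper: induction on the length, division of $G$ by the monic $F_{j+1}$, ruling out a unit coefficient in the remainder $R$ via minimality of $\Min_{k,\overline{A}}$, and then factoring out $\pi$ and reducing to a lower level to contradict the induction hypothesis. The differences are minor. You start at the residue field rather than citing \cite[Lemma 3.1]{PRS2025} at length two. You also strip the maximal power $\pi^s$ and use strong induction, whereas the paper strips a single $\pi$ and uses $\mathrm{Ann}_{\R_{j+1}}(\pi)=\pi^{j}\R_{j+1}=\ker(\R_{j+1}\to\R_j)$; the nonzero leading coefficient of $S$ then survives reduction to $\R_j$, so ordinary induction suffices without your case split.
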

\begin{proof}
   We prove the proposition by using induction on $i$.

   For $i=1$; invoking \cite[Lemma 3.1]{PRS2025}, we obtain $N_{A_2}^{\R_2}=\langle F_2(t)\rangle$. Because then $\mathscr{O}_2$ is a local principal ideal ring of length 2.

   \textbf{Induction Hypothesis:} Let the statement be true for $i=2,3,\cdots,m$.

   \textbf{Inductive step:} Consider $i=m+1$. See the following diagram 
   \[\begin{tikzcd}
	{\mathscr{O}_{m+2}} & {\mathscr{O}_{m+1}} \\
	& {\mathscr{O}_{m}}
	\arrow["{\theta_{m+1}}", from=1-1, to=1-2]
	\arrow["{\theta_{m}\circ\theta_{m+1}}"', from=1-1, to=2-2]
	\arrow["{\theta_{m}}", from=1-2, to=2-2]
\end{tikzcd}\]

Take any polynomial $G(t)\in \mathscr{O}_{m+2}[t]$ such that it is in $N_{A_{m+2}}^{\R_{m+2}}$. Then by \cite[Theorem 2.14]{JacobsonAlgebraBook85}, there exists polynomials $Z(t)$ and $R(t)$ with either $R(t)=0$ or $\deg(R(t)) < \deg(F_{m+2}(t))$ such that $$G(t) = Z(t)F_{m+2}(t) + R(t).$$ 
Now, if possible, let $R(t)$ be non-zero. Then $R(t) \in N_{A_{m+2}}^{\R_{m+2}}$ and $\deg(R(t))<\deg(F_{m+2}(t))$. We consider the following two cases. 
        
If possible, suppose at least one of the coefficients of $R(t)$ is a unit in $\R$. 
Then, after reduction, $\theta(R(t)) = {r(t)}$ will be an annihilating polynomial of $\overline{A}\in \M_n(k)$ in $k[t]$. This contradicts the minimality of $\deg\overline{F}(t)$.
So, this case is not possible.
        
Thus, we assume that $R(t) \in \mathfrak{m}_{m+2}[t]$.  
Since $\R_{m+2}$ is a local principal ideal ring, we may write $\mathfrak{m}_{m+2} = \langle \pi_{m+2} \rangle$.  
Then there exists $S(t)\in \mathscr{O}_{m+2}[t]$ with leading coefficient not congruent to 0 $\pmod{ \ker(\theta_{m+1})}$ such that $R(t) = \pi_{m+2}\cdot S(t)$. Therefore, $\deg R(t) = \deg S(t)$.  
Since $R(t) \in N_{A_{m+2}}^{\R_{m+2}}$, we have $\mathbf{0} = R(A_{m+2}) = \pi_{m+2} S(A_{m+2})$, and hence $S(A_{m+2}) \in \M_n(\mathfrak{m}_{m+1})$.  
Applying $\theta_{m+1}$ gives $$\theta_{m+1}({S}(A_{m+2})) = \mathbf{0}$$ which further implies that $\theta_{m+1}(S)(A_{m+1})=\mathbf{0}$. 
Therefore $\theta_{m+1}(S(t))\in N_{A_{m+1}}^{\R_{m+1}}$. Now $$\deg(\theta_{m+1}(S(t)))=\deg(S(t))=\deg(R(t))<\deg(F_{m+2}(t))=\deg(F_{m+1}(t)),$$
since $F(t)$ is monic.
As $N_{A_{m+1}}^{\R_{m+1}}=\langle F_{m+1}(t)\rangle$ by induction hypothesis, it contradicts the fact that $\theta_{m+1}(S(t))\in N_{A_{m+1}}^{\R_{m+1}}$.
Hence, the only possibility is that $R(t)$ must be the zero polynomial. 
Therefore $N_{A_{m+2}}^{\R_{m+2}}=\langle F_{m+2}(t)\rangle$.
\end{proof}

\begin{corollary}{\label{char poly and null ideal}}
    If $A\in \GL_n(\mathscr{\widehat O})$ is cyclic then \Cref{Null ideal} holds for that $A$. Moreover in this case, $\chi(A_{j+1})(t)=\mathrm{M in}_{\mathscr{O}_{j+1},A_{j+1}}(t)$ for each $j\in \mathbb{N}$. 
\end{corollary}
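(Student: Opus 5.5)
The plan is to produce the monic polynomial required by \Cref{Null ideal} directly from the characteristic polynomial. Set $F(t)=\chi(A)(t)=\det(tI-A)\in\widehat{\O}[t]$. This polynomial is monic of degree $n$.

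First, $F(A)=0$ by the Cayley--Hamilton theorem, which holds over any commutative ring. Hence $F\in N_A^{\widehat{\R}}$.

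Second, taking determinants commutes with reduction modulo $\mathfrak m$. So $\overline{F}(t)=\chi(\overline{A})(t)$, and this has degree $n$. Since $\overline{A}$ is cyclic, its minimal polynomial over $k$ equals its characteristic polynomial, so $\deg \Min_{k,\overline{A}}=n=\deg\overline{F}$. Thus all hypotheses of \Cref{Null ideal} are satisfied, and we conclude that $N^{\R_{j+1}}_{A_{j+1}}=\langle F_{j+1}(t)\rangle$ for every $j\in\mathbb N$.

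For the ``moreover'' part, we again use that determinants commute with ring homomorphisms. This gives $F_{j+1}(t)=\widehat{\theta}_{j+1}(\chi(A))=\chi(A_{j+1})(t)$, so $\chi(A_{j+1})$ generates the null ideal of $A_{j+1}$ in $\R_{j+1}[t]$.

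It remains to see that $\chi(A_{j+1})$ is the minimal polynomial, understood as the unique monic annihilating polynomial of least degree over $\R_{j+1}$. Let $G$ be any monic polynomial annihilating $A_{j+1}$. Then $\theta(G)$ annihilates $\overline{A}$ and is monic of the same degree. Hence $\deg G\ge n$ by minimality of $\chi(\overline{A})$ over $k$.

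For uniqueness, suppose $G$ is monic of degree $n$ with $G(A_{j+1})=0$. Then $G-\chi(A_{j+1})$ lies in the null ideal $\langle\chi(A_{j+1})\rangle$ and has degree less than $n$. Since $\chi(A_{j+1})$ is monic, every nonzero multiple of it has degree at least $n$, so $G-\chi(A_{j+1})=0$. Therefore $\Min_{\O_{j+1},A_{j+1}}=\chi(A_{j+1})$.

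The only point needing care is making precise what the minimal polynomial means over the non-domain $\R_{j+1}$. The principal-ideal conclusion of \Cref{Null ideal} is exactly what makes it well defined. The remaining steps are immediate from Cayley--Hamilton and the compatibility of $\det$ with reduction.
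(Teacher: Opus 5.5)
Your proof is correct and follows the same route as the paper: take $F=\chi(A)$, apply \Cref{Null ideal}, and read off the minimal polynomial from the principal null ideal. You are more careful than the paper in two places: you check the hypotheses explicitly (Cayley--Hamilton, and $\overline{\chi(A)}=\chi(\overline{A})=\Min_{k,\overline{A}}$ by cyclicity), and you prove uniqueness; your degree lower bound via reduction to $k$ is a harmless variant of the paper's divisibility argument $H=P\,\chi(A_{j+1})$.
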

\begin{proof}
   Fix one $j$. By \Cref{Null ideal}, $N^{\R_{j+1}}_{A_{j+1}}= \langle \chi(A_{j+1})(t)\rangle$ (assuming $F(t)=\chi(A)(t))$. 
   Now consider $H(t)\in\mathscr{O}_{j+1}[t]$ be another monic polynomial that annihilates $A_{j+1}$ and $\deg(H)<\deg(\chi(A_{j+1}))$. 
   Then from the description of the null ideal, there exists $P(t)\in\mathscr{O}_{j+1}[t]$ such that $H(t)=P(t)\chi(A_{j+1})(t)$. But $\deg(P(t)\chi(A_{j+1})(t))$ is always $\geq\deg(\chi(A_{j+1})(t))$. 
   This contradicts the existence of $H(t)$ in $N^{\R_{j+1}}_{A_{j+1}}$ with $\deg(H)<\deg(\chi(A_{j+1}))$. Hence $\chi(A_{j+1})(t)=\mathrm{M in}_{\mathscr{O}_{j+1},A_{j+1}}(t)$. This proves the second assertion, whence the first assertion, by taking $F=\chi(A)(t)$.
\end{proof}

The {canonical map} $ \mathcal{T} \colon \widehat{\mathcal{O}} \rightarrow \varprojlim\limits_{j\geq 1}\frac{\widehat{\mathscr{O}}}{\mathfrak{m}^j\widehat{\mathscr{O}}} $ defined by

\[
\mathcal{T}(r) = \left( r + \mathfrak{m},\  r + \mathfrak{m}^2,\  r + \mathfrak{m}^3,\  r + \mathfrak{m}^4,\  \dots \right).
\]  is a ring homomorphism and $\widehat{\mathscr{O}}$ is \emph{complete with respect to maximal ideal} $\mathfrak{m}$ means $\mathscr{T}$ is an isomorphism; see \cite[section 7.1]{Eisenbud95} . 
In this case the isomorphism $\mathscr{T}$ induces an isomorphism  $\mathfrak{T}: \M_n(\widehat\O)\rightarrow \M_n(\varprojlim\limits_{j\geq 1}\frac{\widehat{\mathscr{O}}}{\mathfrak{m}^j\widehat{\mathscr{O}}})$ defined by $A=[a_{u,v}]\mapsto \mathfrak{T}(A)=[\mathscr{T}(a_{u,v})]$ where $u,v \in \{1,2,\cdots,n\}$ here. Using the description of the canonical map $\mathscr{T}$, we can further define a map $\mathfrak{T}^{*}:\M_n(\varprojlim\limits_{j\geq 1}\frac{\widehat{\mathscr{O}}}{\mathfrak{m}^j\widehat{\mathscr{O}}})\rightarrow \varprojlim\limits_{j\geq 1}\M_n(\frac{\widehat{\mathscr{O}}}{\mathfrak{m}^j\widehat{\mathscr{O}}})$ which takes $\mathfrak{T}(A)$ to $(A_1,A_2,\ldots)$; where $A_i=[a_{u,v}+\mathfrak{m}^i]$ for $i\in \mathbb{N}$;  (Note: $a_{u,v}$ denotes the $(u,v)$-th entry of matrix $A$).
This correspondence gives an isomorphism between $\M_n(\varprojlim\limits_{j\geq 1}\frac{\widehat{\mathscr{O}}}{\mathfrak{m}^j\widehat{\mathscr{O}}})$ and $\varprojlim\limits_{j\geq 1}\M_n(\frac{\widehat{\mathscr{O}}}{\mathfrak{m}^j\widehat{\mathscr{O}}})=\varprojlim\limits_{j\geq 1}\M_n(\mathscr{O}_j)$ and hence an isomorphism  $\mathscr{T}^{*}:\M_n(\widehat\O)\rightarrow \varprojlim\limits_{j\geq 1}\M_n(\mathscr{O}_j)$ defined as $A\mapsto \mathfrak{T}^{*}\circ\mathfrak{T}(A)$; which can be visualized by the following commutative diagram.

\[\begin{tikzcd}
	{\M_n(\widehat\O)} & {\M_n(\varprojlim\limits_{j\geq 1}\frac{\widehat{\mathscr{O}}}{\mathfrak{m}^j\widehat{\mathscr{O}}})} \\
	& {\varprojlim\limits_{j\geq 1}\M_n(\mathscr{O}_j)}
	\arrow["{\mathfrak{T}}", from=1-1, to=1-2]
	\arrow["{\mathscr{T}^{*}=\mathfrak{T}^{*}\circ\mathfrak{T}}"', from=1-1, to=2-2]
	\arrow["{\mathfrak{T}^{*}}", from=1-2, to=2-2]
\end{tikzcd}\]
\begin{theorem}{\label{lift-length two}}
    Let $\widehat{\mathscr{O}}$ be a local principal ideal ring, complete with respect to its unique maximal ideal $\mathfrak{m}$ and having residue field $k$ of characteristic not equal to 2. Let $A$ be a cyclic element in $\M_n(\widehat{\mathscr{O}})$ and $F\in\widehat{\mathscr{O}}[t]$ be a monic polynomial of degree $d$.
Let there be $\widetilde{B}\in\M_n(k)$ such that $f(\widetilde{B})=\overline{A}$ and $f'(\widetilde{B})\in\GL_n(k)$. 
Then there exists $B\in\M_n(\widehat{\mathscr{O}})$ such that $\overline{B}=\widetilde{B}$ and $F(B)=A$.
\end{theorem}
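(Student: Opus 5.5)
We build a compatible sequence of lifts $B_\ell\in\M_n(\R_\ell)$ with $B_1=\widetilde B$, $\theta_\ell(B_{\ell+1})=B_\ell$ and $F_\ell(B_\ell)=A_\ell$ for every $\ell\ge 1$, and then set $B=(\mathscr{T}^{*})^{-1}\bigl((B_\ell)_\ell\bigr)\in\M_n(\widehat{\O})$ via the isomorphism $\mathscr{T}^{*}\colon\M_n(\widehat\O)\to\varprojlim_j\M_n(\O_j)$. Evaluation of $F$ commutes with each reduction $\widehat\theta_\ell$, so $\widehat\theta_\ell(F(B))=F_\ell(B_\ell)=A_\ell$ for all $\ell$, and $\mathscr{T}^*$ being injective gives $F(B)=A$. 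We also get $\overline B=B_1=\widetilde B$. (Here $f$ in the statement is read as $\overline F$.) The case $\ell=1\to2$ is \Cref{prop:hens-root-full}, which holds equally for a non-finite length-two ring since its proof does not use finiteness. It remains to do the inductive step $\ell\to\ell+1$.

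\emph{Inductive step.} Suppose $B_\ell$ is given with $F_\ell(B_\ell)=A_\ell$. I would first choose $B_\ell$, and hence its lifts, to commute with $A$. Since $\overline{A}$ is cyclic and $\widetilde B$ commutes with $\overline A=\overline F(\widetilde B)$, we have $\widetilde B=\overline{p}(\overline A)$ for some polynomial $p$. The idea is to keep every $B_\ell$ a polynomial in $A_\ell$, say $B_\ell=P_\ell(A_\ell)$. This is possible because the centralizer of a cyclic matrix over $\R_\ell$ is $\R_\ell[A_\ell]$ (\cite[Lemma 5.2]{PRS2025}, cyclicity of $\R^n$). Take an arbitrary lift $C=P(A_{\ell+1})$ of $B_\ell$, where $P$ is any lift of $P_\ell$. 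Then $F_{\ell+1}(C)-A_{\ell+1}=\pi^{\ell}E$ with $E\in\M_n(\R_{\ell+1})$. Moreover $E$ commutes with $A_{\ell+1}$, being a polynomial in it, so $\overline{E}\in k[\overline A]$.

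We look for the correction $B_{\ell+1}=C+\pi^\ell X$. Since $\pi^{2\ell}=0$ in $\R_{\ell+1}$ and $X$ commutes with $C$ (we will choose $X$ to be a polynomial in $A$), Taylor expansion gives
\begin{align*}
F_{\ell+1}(C+\pi^\ell X)=F_{\ell+1}(C)+\pi^\ell F'_{\ell+1}(C)X .
\end{align*}
The higher terms vanish because $\pi^{2\ell}=0$ for $\ell\ge1$. So it suffices to solve $\overline{F}'(\widetilde B)\,\overline X=-\overline E$ in $\M_n(k)$. The hypothesis $\overline F'(\widetilde B)\in\GL_n(k)$ gives $\overline X=-\overline F'(\widetilde B)^{-1}\overline E$. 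This lies in $k[\overline A]$, since $\overline F'(\widetilde B)\in k[\overline A]$ and inverses stay inside the finite-dimensional commutative algebra $k[\overline A]$. Lifting the polynomial expressing $\overline X$ gives $X\in\R_{\ell+1}[A_{\ell+1}]$. Then $X$ commutes with $C$, which justifies the Taylor step, and $B_{\ell+1}$ is again a polynomial in $A_{\ell+1}$ reducing to $B_\ell$.

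\emph{Main obstacle.} The delicate point is commutativity, which the Taylor expansion needs. It is also what lets $\overline E$ live in $k[\overline A]$, so that the correction stays in the commutative algebra. Both facts rely on the cyclicity of $A$, so that centralizers are polynomial algebras. \Cref{char poly and null ideal} keeps $\R_\ell[A_\ell]\cong\R_\ell[t]/(\chi(A_\ell))$ well behaved at every level. If the polynomial description of $B_\ell$ were awkward, I would instead work directly in $\Zen_{\M_n(\R_{\ell+1})}(A_{\ell+1})$ and use only that it is commutative.
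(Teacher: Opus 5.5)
Your proposal is correct and is essentially the paper's argument. You lift level by level inside the commutative algebra $\R_{\ell+1}[A_{\ell+1}]$ via a Newton correction $C+\pi^\ell X$ (Taylor expansion, with $\pi^{2\ell}=0$), then assemble the compatible sequence into $B$ through $\mathscr{T}^{*}$. The only differences are that you solve for $\overline{X}$ over $k$ instead of inverting $F'_{\ell+1}(C)$ in $\R_{\ell+1}[A_{\ell+1}]$, so the null-ideal isomorphism is not really needed, and that your step already works for $\ell=1$.

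One small repair is needed. $E$ itself need not be a polynomial in $A_{\ell+1}$; only $\pi^\ell E$ is. Still, $\overline{E}\in k[\overline{A}]$ holds: $\pi^\ell[E,A_{\ell+1}]=0$ forces $[E,A_{\ell+1}]\in\pi\M_n(\R_{\ell+1})$ whenever $\pi^\ell\neq 0$ in $\R_{\ell+1}$, and if $\pi^\ell=0$ there you can simply take $E=0$.
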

\begin{proof}
   By \Cref{prop:hens-root-full}, there exists $B_2\in M_n(\mathscr{O}_2)$ such that
$F_2(B_2)=A_2$.
We construct such a matrix by successively lifting a solution of the equation
\[
F_j(X)=A_j
\]
from the level $\mathscr{O}_j$ to $\mathscr{O}_{j+1}$ for all $j\ge 2$.
This is carried out by induction.
We begin by proving the following statement by induction on $i$:

\noindent \emph{\textbf{Claim 1:} For each $i\in \mathbb{N}$, any solution $B_i\in \M_n(\mathscr{O}_i)$ of the equation
$F_i(X)=A_i$
satisfying $F_i'(B_i)\in \GL_n(\mathscr{O}_i)$ can be lifted to a solution
$B_{i+1}\in \M_n(\mathscr{O}_{i+1})$
of the equation
$F_{i+1}(X)=A_{i+1}.$
}

 \emph{Proof of the claim 1:}  Start with $i=1$. By \Cref{prop:hens-root-full} we get the existence of $B_2\in \M_2(\mathscr{O}_2)$ such that $F_2(B_2)=A_2$. Hence, the statement is true for $i=1$.
   
   \textbf{Induction Hypothesis:} Assume that the statement is true for all $i\leq m-1$.

   \textbf{Inductive Step:} Consider the equation $F_{m+1}(X)=A_{m+1}$ for $A_{m+1}\in\M_n(\mathscr{O}_{m+1})$ after evaluating matrix indeterminate $X$ instead of $t$ in $F_{m+1}(t)$.
   Applying the reduction $\theta_{m}$ we obtain the equation $F_{m}(X)=A_m$ over $\M_n(\mathscr{O}_m)$ after evaluating matrix indeterminate $X$ instead of $t$ in $F_{m}(t)$. 
   By induction hypothesis, there exists $B_m\in \M_n(\mathscr{O}_m)$ such that $F_m(B_m)=A_m$ and $F_m'(B_m)\in \GL_n(\mathscr{O}_m)$, as $F_{m+1}'(B_{m+1})\in \GL_n(\mathscr{O}_{m+1})$.
   Note that $B_m\in \mathscr{Z}_{\M_n(\mathscr{O}_m)}(A_m)$. 
   If $F_{m+1}(X)=A_{m+1}$ has a solution in $\M_n(\mathscr{O}_{m+1})$ then it must belong to $\mathscr{Z}_{\M_n(\mathscr{O}_{m+1})}(A_{m+1})$. 
   There is a ring isomorphism between $\mathscr{Z}_{\M_n(\mathscr{O}_{m+1})}(A_{m+1})$ and  $\dfrac{\mathscr{O}_{m+1}[t]}{\langle\chi(A_{m+1})(t)\rangle}$; see  \cite[section 4]{PRS2025}. Define the evaluation map $\mathrm{EV}_{m+1}: \mathscr{O}_{m+1}[t]\rightarrow \mathscr{O}_{m+1}[A_{m+1}]$ by $H(t)\mapsto H(A_{m+1})$. This is a surjection with $\ker(\mathrm{EV}_{m+1})=N^{\R_{m+1}}_{A_{m+1}}$. 
   Since $N^{\R_{m+1}}_{A_{m+1}}=\langle\chi(A_{m+1})(t)\rangle$ by \Cref{Null ideal} and \Cref{char poly and null ideal}, one must have $\dfrac{\mathscr{O}_{m+1}[t]}{\langle\chi(A_{m+1})(t)\rangle}\cong \mathscr{O}_{m+1}[A_{m+1}]$. 
   Similarly $\dfrac{\mathscr{O}_{m}[t]}{\langle\chi(A_{m})(t)\rangle}\cong \mathscr{O}_{m}[A_{m}]$.
   
   \[\begin{tikzcd}
	{\mathscr{Z}_{\M_n(\mathscr{O}_{m+1})}(A_{m+1})} && {\mathscr{Z}_{\M_n(\mathscr{O}_{m})}(A_{m})} \\
	{\mathscr{O}_{m+1}[A_{m+1}]} && {\mathscr{O}_{m}[A_{m}]}
	\arrow["{\theta_m}", from=1-1, to=1-3]
	\arrow["\mathrm{iso}_{m+1}"', from=1-1, to=2-1]
	\arrow["\mathrm{iso}_m", from=1-3, to=2-3]
	\arrow["{\theta_m}", from=2-1, to=2-3]
\end{tikzcd}\]

Let $B_m\in\Zen_{\M_n(\R_m)}(A_m)$, and $\mathrm{iso}_m(B_m)=\sum\limits_{j=0}^ra_jA_m^j$. 
Since $\theta_m$ is a surjection, there exists $\check{B}_{m+1}=\sum\limits_{j=0}^rb_jA_{m+1}^j\in\mathscr{O}_{m+1}[A_{m+1}] $ such that $\theta_m(\check{B}_{m+1})=\is_m(B_m)$.
Note that $\ker(\theta_m)=\mathfrak{n}_{m+1}[A_{m+1}]$. where $\mathfrak{n}_{m+1}=\ker(\mathscr{O}_{m+1}\rightarrow\mathscr{O}_m)=\dfrac{\pi^m\widehat{\O}}{\pi^{m+1}\widehat{\R}}$.

As $\theta_m(F_{m+1}(\check{B}_{m+1}))=F_m(\is_m(B_m))=\is_m(A_m)=A_m$,
one can write $F_{m+1}(\check{B}_{m+1})= A_{m+1}+\pi_{m+1}^mC$ for some $C\in \mathscr{O}_{m+1}[A_{m+1}]$. 
As $F_m'(B_m)\in \GL_n(\mathscr{O}_m)$ by induction hypothesis, therefore $F_m'(\is_m(B_m))\in \mathscr{O}_m[A_m]^{\times}$.
As $\theta_m(F_{m+1}'(\check{B}_{m+1}))=F_m'(\is_m(B_m))$ therefore $\is_{m+1}(F_{m+1}'(\check{B}_{m+1}))$ must be an unit in $\mathscr{O}_{m+1}[A_{m+1}]$.
Consider $D=-CF_{m+1}'(\check{B}_{m+1})^{-1}$. Invoking the Taylor series expansion (See for instance \cite{ConradHensel}), we obtain \begin{align*}
    F_{m+1}(\check{B}_{m+1}+\pi_{m+1}^mD)&=F_{m+1}(\check{B}_{m+1})+\pi_{m+1}^mDF_{m+1}'(\check{B}_{m+1})\\
    &=F_{m+1}(\check{B}_{m+1})-\pi_{m+1}^mC\\
    &=A_{m+1}.
\end{align*} 
Note that $\widehat{B}_{m+1}=\check{B}_{m+1}+\pi_{m+1}^mD$ is a lift of $\is_m(B_m)$ with respect to $\theta_m:\mathscr{O}_{m+1}[A_{m+1}]\rightarrow\mathscr{O}_m[A_m]$.
Through the isomorphism $\is_{m+1}$, there exists a unique $B_{m+1}\in \mathscr{Z}_{\M_n(\mathscr{O}_{m+1})}(A_{m+1})$ such that $\is_{m+1}(B_{m+1})=\widehat{B}_{m+1}$. 
The commutativity of the above diagram and existence of the isomorphisms $\is_{m+1}, \is_m$ with $\is_{m+1}(A_{m+1})=A_{m+1}$ and $\is_m(A_m)=A_m$, immediately implies that, $B_{m+1}$ must be a lift of $B_m$ with respect to $\theta_m: \mathscr{Z}_{\M_n(\mathscr{O}_{m+1})}(A_{m+1})\rightarrow\mathscr{Z}_{\M_n(\mathscr{O}_{m})}(A_{m})$ such that $F_{m+1}(B_{m+1})=A_{m+1}$. Hence, claim 1 is true for all $i\in \mathbb{N}$.

Now the canonical isomorphism $\mathscr{T}$ induces an isomorphism $\widetilde{\mathscr{T}}$ in the polynomial ring level given by:
$$\widetilde{\mathscr{T}}:\widehat{\mathscr{O}}[t]\rightarrow \varprojlim\limits_{j\geq 1}\frac{\widehat{\mathscr{O}}}{\mathfrak{m}^j\widehat{\mathscr{O}}}[t]$$ defined by $H(t)\mapsto (H_1(t),H_2(t),\cdots)$ where $H_j(t)=\widehat{\theta}_j(H(t))\in \frac{\widehat{\mathscr{O}}}{\mathfrak{m}^j\widehat{\mathscr{O}}}[t] $. 
This, and by the commutative diagram before the start of the theorem, we obtain the following commutative diagram.

\[\begin{tikzcd}
	{\M_n(\widehat\O)} && {\varprojlim\limits_{j\geq 1}\M_n(\mathscr{O}_j)} \\
	{\M_n(\widehat\O)} && {\varprojlim\limits_{j\geq 1}\M_n(\mathscr{O}_j)}
	\arrow["{\mathscr{T}^*}", from=1-1, to=1-3]
	\arrow["H"', from=1-1, to=2-1]
	\arrow["{(H_1,H_2,\cdots)}", from=1-3, to=2-3]
	\arrow["{\mathscr{T}^*}"', from=2-1, to=2-3]
\end{tikzcd}\]
where $H(X)$ for $X\in \M_n(\widehat{\mathscr{O}})$ denotes evaluation at $X$ in place of $t$ in $H(t)$.
A similar notion holds for $H_i$'s.

Let $A\in \M_n(\widehat{\O})$ be cyclic such that there exists  $\Tilde{B}\in \M_n(k)$ be such that $f(\Tilde{B})=\Bar{A}$ and $f'(\Tilde{B})\in \GL_n(k)$ where $A_1=\Bar{A}$ and $B_1=\Tilde{B}$. 
By \Cref{prop:hens-root-full}, there exists $B_2\in \M_n(\O_2)$ such that $F_2(B_2)=A_2$ and $F_2'(B_2)\in \GL_n(\O_2)$.
By \emph{Claim 1} we get the existence of $B_3,B_4,\cdots$ satisfying the properties mentioned in \emph{Claim 1} such that $F_j(B_j)=A_j$ for each $j\in \mathbb{N}$. 
Now under the isomorphism $\mathscr{T}^*$, we have $\mathscr{T}^*(A)=(A_1,A_2,\cdots)$. As $(B_1,B_2,\cdots)\in \varprojlim\limits_{j\geq 1}\M_n(\mathscr{O}_j)$ by \emph{Claim 1} (as $B_{j+1}$ is the lift of $B_j$ for $j\geq 1)$. 
Therefore there exists $B\in \M_n(\widehat{\O})$ such that $\mathscr{T}^*(B)=(B_1,B_2,\cdots)$.
Therefore $B$ is a lift of $B_1=\Tilde{B}$ under the map $\widehat{\theta}_1: \M_n(\widehat{\O})\rightarrow \M_n(k)$.
By the above commutative diagram, we obtain $F(B)=A$. 
\end{proof}
\color{black}


\begin{theorem}\label{thm:n-var}
     Let $\widehat{\mathscr{O}}$ be a local principal ideal ring, complete with respect to its unique maximal ideal $\mathfrak{m}$ having residue field $k$ of characteristic other than $2$. Let $A$ be a cyclic element in $\M_n(\widehat{\mathscr{O}})$ and $F\in\widehat{\mathscr{O}}[x_1,x_2,\cdots,x_m]$ be a monic polynomial of degree $d$.
Let there be $\mathbf{\widetilde{B}}=(\widetilde{B}_1,\widetilde{B}_2,\cdots,\widetilde{B}_m)\in\M_n(k)\times\M_n(k)\times\cdots\times\M_n(k) $ ($m$-copies) such that $\widetilde{B}_i,\widetilde{B}_j$ commutes with each other for every $i,j\in \{1,2,\cdots,m\}$ and $f(\mathbf{\widetilde{B}})=\overline{A}$. Moreover we assume that $\frac{\partial f}{\partial x_i}|_{\mathbf{\widetilde{B}}}\in\GL_n(k)$ for $i=1,2,\cdots,r$; where $1\leq r\leq m$ and $\frac{\partial f}{\partial x_i}|_{\mathbf{\widetilde{B}}}=\mathbf{0}\in \M_n(k)$ for $i=r+1,r+2,\cdots,m$. 
Then there exists $\mathbf{B}=(B_1,B_2,\cdots,B_m)\in\M_n(\widehat{\mathscr{O}})\times\M_n(\widehat{\mathscr{O}})\times...\times\M_n(\widehat{\mathscr{O}})$ ($m$-copies) such that $\overline{\mathbf{B}}=\mathbf{\widetilde{B}}$ (i.e. $\widehat{\theta}_1(B_i)=\widetilde{B}_i$ for each $i=1,2,\cdots,m$) and $F(\mathbf{B})=A$.
\end{theorem}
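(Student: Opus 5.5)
Since $A$ is cyclic, I would work entirely inside the commutative centralizer algebra, so that the multivariable problem becomes a commutative Hensel lifting. By \Cref{Null ideal}, \Cref{char poly and null ideal} and the isomorphism from \cite[section 4]{PRS2025} used in the proof of \Cref{lift-length two}, for every level $j$ we have
\begin{align*}
\Zen_{\M_n(\O_j)}(A_j)\cong \O_j[A_j]\cong \O_j[t]/\langle \chi(A_j)(t)\rangle ,
\end{align*}
which is a commutative ring, and these identifications are compatible with the reductions $\theta_j$. The first step is to check that the residue solution lives in this ring. Each $\widetilde B_i$ commutes with every $\widetilde B_l$, so each $\widetilde B_i$ commutes with $f(\widetilde{\mathbf B})=\overline A$, and hence $\widetilde B_i\in k[\overline A]$. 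Because all coordinates of the tuple are polynomials in $A_j$, any tuple we build by lifting inside $\O_j[A_j]$ will automatically consist of pairwise commuting matrices. All evaluations of $F$ and of its partial derivatives are then honest evaluations of a commutative polynomial in a commutative ring, so the first-order Taylor expansion
\begin{align*}
F(\mathbf b+\pi^j\mathbf d)=F(\mathbf b)+\pi^j\sum_i \frac{\partial F}{\partial x_i}(\mathbf b)\,d_i \pmod{\pi^{j+1}}
\end{align*}
is valid.

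Next I would reduce to the one-variable case. Freeze the coordinates $x_2,\dots,x_m$. For each $i\ge 2$, lift $\widetilde B_i=g_i(\overline A)$ to $B_i:=G_i(A)\in\widehat{\O}[A]$, where $G_i$ is any lift of the polynomial $g_i$. Then set
\begin{align*}
P(x_1)=F(x_1,B_2,\dots,B_m)\in \widehat{\O}[A][x_1].
\end{align*}
This is a polynomial over the commutative complete local ring $\widehat{\O}[A]$, and its reduction satisfies $\overline P(\widetilde B_1)=\overline A$. Moreover $\overline{P}'(\widetilde B_1)=\frac{\partial f}{\partial x_1}|_{\widetilde{\mathbf B}}$ is invertible, because $r\ge1$. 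The hypothesis that the partials indexed by $r+1,\dots,m$ vanish is not needed for this route. The partials indexed by $2,\dots,r$ give freedom I will not use; it suffices that one partial is a unit.

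Then I would run the inductive step of Claim 1 in the proof of \Cref{lift-length two} verbatim for $P$ over $\O_j[A_j]$. Take any lift $\check b$ of the current solution $b_j$. Write $P(\check b)=A_{j+1}+\pi^j C$ and replace $\check b$ by $\check b-\pi^j C\,P'(\check b)^{-1}$. Here $P'(\check b)$ is a unit in the local ring $\O_{j+1}[A_{j+1}]$, since its reduction is invertible in $k[\overline A]$ and invertibility in a finite commutative algebra can be detected modulo the nilpotent ideal. The resulting compatible system $(b_j)_j$ defines an element of $\varprojlim \O_j[A_j]$. Finally, the isomorphism $\mathscr T^*$ and the commutative diagram at the end of the proof of \Cref{lift-length two}, applied now to multivariable evaluation, give $B_1\in\M_n(\widehat{\O})$ with $F(B_1,\dots,B_m)=A$ and $\overline{B_i}=\widetilde B_i$.

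The main obstacle is the first step: making sure that all the $\widetilde B_i$, and therefore all the lifts, lie in the commutative algebra $k[\overline A]$, respectively $\O_j[A_j]$, so that the Taylor expansion and the "monicity" of $P$ are meaningful. Monicity was used in \Cref{lift-length two} only through the null-ideal description of $A$, not through $P$. A second point to check is that $P$ has coefficients in $\widehat{\O}[A]$ rather than in $\widehat{\O}$. The Hensel step only needs commutativity and invertibility of $P'$, so \Cref{prop:hens-root-full} is not needed; I start directly from level one instead.
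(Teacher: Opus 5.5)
Your proposal is correct. It shares the paper's framework: you work inside the commutative algebra $\O_j[A_j]\cong\O_j[t]/\langle\chi(A_j)(t)\rangle$, correct by a first-order Taylor step $\pi^j D$, and pass to the limit via $\mathscr T^*$. Where you differ is in how the correction is distributed.

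The paper lifts every $\widetilde B_i$ to some $\check B_i$ and then modifies all of the first $r$ coordinates simultaneously, solving $\sum_{i\le r}\frac{\partial F}{\partial x_i}|_{\check{\mathbf B}}D_i=-C$. It uses the weights $D_i=-C(r\,\partial_iF)^{-1}$ when $p\nmid r$. When $p\mid r$ it uses weight $\frac{1}{2(r-1)}$ on the first $r-1$ coordinates and $\frac12$ on the $r$-th; here $p=\mathrm{char}\,k$, and this is where $p\neq 2$ enters. The coordinates $i>r$ may receive arbitrary corrections because their partials vanish modulo $\pi$.

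You instead freeze $x_2,\dots,x_m$ at fixed lifts $G_i(A)$ and run one-variable Hensel for $P(x_1)=F(x_1,B_2,\dots,B_m)$ over $\widehat{\O}[A]$. In the paper's scheme this is just the choice $D_1=-C(\partial_1F)^{-1}$ and $D_i=0$ for $i\ge 2$. Your route is shorter, has no case split on $p\mid r$, and proves slightly more. One invertible partial derivative suffices, and the hypotheses on $\partial_i f|_{\widetilde{\mathbf B}}$ for $i\ge 2$ (invertibility for $i\le r$, vanishing for $i>r$) are never used. The paper's symmetric correction buys nothing extra for existence.

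One small correction: $\O_{j+1}[A_{j+1}]$ and $\widehat{\O}[A]$ need not be local. In the paper's own example $k[\overline A]\cong k\times k$. Your unit argument does not actually need locality. It only needs that the kernel $\pi\O_{j+1}[A_{j+1}]$ of reduction to $k[\overline A]$ is nilpotent. Identifying that kernel uses cyclicity of $\overline A$. Writing $P(\check b)-A_{j+1}=\pi^jC$ with $C\in\O_{j+1}[A_{j+1}]$ uses the null-ideal description of \Cref{Null ideal}, exactly as in the paper.
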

\begin{proof}
At first, we prove this for $\ell=2$. 
Later, we apply an induction argument to ensure its validity over each finite step, and from there we prove the theorem for $\widehat{\mathscr{O}}$ level.
\[\begin{tikzcd}
	{(\mathscr{Z}_{\M_n(\mathscr{O}_{2})}(A_{2}))^m} && {(\mathscr{Z}_{\M_n(k)}(\Bar{A}))^m} \\
	{(\mathscr{O}_2[A_2])^m} && {(k[\Bar{A}])^m}
	\arrow["{(\theta_1,\theta_1,\cdots,\theta_1)}", from=1-1, to=1-3]
	\arrow["\mathbf{\mathrm{ISO}_{2}}"', from=1-1, to=2-1]
	\arrow["\mathbf{\mathrm{ISO}_1}", from=1-3, to=2-3]
	\arrow["{(\theta_1,\theta_1,\cdots,\theta_1)}", from=2-1, to=2-3]
\end{tikzcd}\]  where $\mathbf{\mathrm{ISO}_{2}}=\underbrace{(\mathrm{iso}_2,\mathrm{iso}_2,\cdots,\mathrm{iso}_2)}_{m-\text{times}}$ and $\mathbf{\mathrm{ISO}_{1}}=\underbrace{(\mathrm{iso}_1,\mathrm{iso}_1,\cdots,\mathrm{iso}_1)}_{m-\text{times}}$.

We readily have $\mathrm{ISO}_2(A_2,A_2,\cdots,A_2)=(A_2,A_2,\cdots,A_2)$.
We consider $\widehat{\theta}_2(F)=F_2$ over $\R_2$. 
Given $\widetilde{\mathbf{B}} \in\GL_n(k)$ satisfying $\overline{F}(\widetilde{\mathbf{B}})=\overline{A}$, we have $\widetilde{B}_i\in\Zen_{\M_n(k)}(\overline{A})$ for each $i=1,2,\cdots,m$.
Since $\overline{A}$ is cyclic, $\Zen_{\M_n(k)}(\overline{A}) = k[\overline{A}]$, thus
$\widetilde{B}_i = \sum\limits_{j=0}^sa_j^{(i)} \overline{A}^j$ for some $a_j^{(i)}\in k$. 
Now denote $\sum\limits_{j=0}^s b_j^{(i)} A_2^j = \check{B}_i \in\R_2[A_2] \subseteq\M_n(\R_2)$, a lift of $\widetilde{B}_i$, under the map $\theta_1 \colon \R_2[A_2] \longrightarrow k[\overline{A}]$, where $\theta(b_j^{(i)})=a_j^{(i)}$. 
Note that $\ker(\theta_1) = \mathfrak{m}[A_2]$.

Consider $\mathbf{\check{B}}=(\check{B}_1,\check{B}_2,\cdots,\check{B}_m)$. 
We have $F_2(\mathbf{\check{B}})=A_2+\pi C$ for some $C\in \R_2[A_2]$.
We want to produce $\mathbf{B_0}=(\check{B}_1+\pi D_1,\check{B}_2+\pi D_2,\cdots,\check{B}_m+\pi D_m)$ so that $F_2(\mathbf{B_0})=A_2$ for some $D_i\in \R_2[A_2]$; $i=1,2,\cdots,m$. 
Over any commutative ring $\mathscr{R}$ with unity, and for a polynomial $F(\textbf{x})$ in $m$-variables over $\mathscr{R}$, one can write $F(\textbf{x}+\textbf{y}) = F(\textbf{x}) + \sum_{i=1}^{m} \frac{\partial F}{\partial x_i} y_i + \sum_{1 \leq i,j \leq m} C_{ij}(\textbf{x},\textbf{y}) y_i y_j$; where $C_{ij}(\textbf{x},\textbf{y})\in \mathscr{R}[\textbf{x},\textbf{y}]$; $\mathbf{x}=(x_1,x_2,\cdots,x_m)$ and $\mathbf{y}=(y_1,y_2,\cdots,y_m)$; see \cite[equation (3.4)]{ConradmultiHensel}.
Evaluating $\check{B}_i$ and $\pi D_i$ in place of $x_i$ and $y_i$ respectively for every $i=1,2,\cdots,m$ we obtain $F_2(\mathbf{B_0})=F_2(\check{\mathbf{B}})+\pi \sum_{i=1}^{m} \frac{\partial F_2}{\partial x_i}|_{\check{\mathbf{B}}} D_i $.
One needs to pick $D_i$s carefully so that $F_2(\mathbf{B_0})=A_2$. 
Here, two cases may arise depending on $r$.

\noindent \textbf{Case I. $(p \nmid r)$}: In this case let us choose $D_i= -C\left(r\frac{\partial F_2}{\partial x_i}|_{\check{\mathbf{B}}}\right)^{-1}$ for $i=1,2,\cdots,r$ and any arbitrary element of $\R_2[A_2]$ as $D_i$'s for $i=r+1,r+2,\cdots,m$.

\noindent \textbf{Case II. $(p \mid r)$}:  Choose $$D_i=\begin{cases}
    -C\left(2(r-1)\frac{\partial F_2}{\partial x_i}|_{\check{\mathbf{B}}}\right)^{-1}&1\leq i\leq r-1\\
    -C\left(2\frac{\partial F_2}{\partial x_i}|_{\check{\mathbf{B}}}\right)^{-1}&i=r
\end{cases}$$
and for $i=r+1,r+2,r+3,\cdots,m$, any element of $\R_2[A_2]$ fits well as a choice of $D_i$ because then $\frac{\partial F_2}{\partial x_i}|_{\check{\mathbf{B}}}$ are non-units in $\R_2[A_2]$. Therefore, the statement for $\ell=2$ holds. 
Now, by a similar sort of induction method used in \Cref{lift-length two}, the result follows for the case of $\widehat{\R}$, taking into account that the choice of $\mathbf{B}_0$ should be $(\check{B}_1+\pi^\ell D_1,\cdots,\check{B}_m+\pi^\ell D_m)$ when one considers lifting $\M_n(\R_\ell)\longleftarrow\M_n(\R_{\ell+1})$.
\end{proof}
\begin{example}
    Consider $\widehat{\R}=\mathbf{Z}_5$ the ring of $5$-adic integers. 
    Its residue field $k$ is isomorphic to $\mathbb{F}_5$.
    Take $A=\left(\begin{array}{cc}
       0  &  0\\
        0 & 9
    \end{array}\right)\in \M_2(\mathbf{Z}_5)$ to be a lift of $\Bar{A}=\left(\begin{array}{cc}
       0  &  0\\
        0 & 4
    \end{array}\right)\in \M_2(\mathbb{F}_5)$. 
    Consider $F(x,y)=xy+y^2\in \mathbf{Z}_5[x,y]$.
    Now there exists $\Tilde{B}=(\Tilde{B}_1,\Tilde{B}_2)\in (\M_2(\mathbb{F}_5))^2$ such that it satisfies assumptions of \Cref{thm:n-var}. 
    Take $\Tilde{B}_1=\left(\begin{array}{cc}
       4  &  0\\
        0 & 0
    \end{array}\right)$, $\Tilde{B}_2=\left(\begin{array}{cc}
       1  &  0\\
        0 & 2
    \end{array}\right)$. Consider $\Bar{F}=f$, then $\frac{\partial f}{\partial x}|_{\Tilde{B}}=\Tilde{B}_2, \frac{\partial f}{\partial y}|_{\Tilde{B}}=\Tilde{B}_2^2 $ both are invertible. There exists $B=(B_1,B_2)\in (\M_2(\mathbf{Z}_5))^2$ such that $F(B_1,B_2)=A$ where $B_1=\left(\begin{array}{cc}
       -1  &  0\\
        0 & 2^{-1}.5
    \end{array}\right), B_2=\left(\begin{array}{cc}
       1  &  0\\
        0 & 2
    \end{array}\right)$ are the lifts of $\Tilde{B}_1$ and $\Tilde{B}_2$ respectively, in $\M_2(\mathbf{Z}_5)$.
\end{example}

\begin{remark}[A few words on word problems on groups]
    The word problems on group is well-known; given an element $w\in \mathbf{F}_r$ and a group $G$, one gets a map by evaluation
    \begin{align*}
        \widetilde{w}:G^r\longrightarrow G,\,(g_1,g_2,\cdots,g_r)\mapsto w(g_1,g_2,\cdots,g_r);
    \end{align*}
    known as \emph{word maps}. 
    Influential works include the settlement of the Ore's conjecture and that of the width of a word on finite simple groups; see \cite{LiebeckBrienShalev10,Shalev2009}.
    In the context of lifting solution, one has the result for the commutator map on $\mathrm{SL}_n(\R)$ in \cite{AvniShalev13}.
    \Cref{thm:n-var} also shows that if one starts with a word $w$, having no inverse in its reduced form, then lifting of a solution to the equation $\widetilde{w}(x_1,x_2,\ldots,x_r)=A$ for a cyclic matrix $A\in\GL_n(\widehat{\R})$ exists whenever a pairwise commuting solution exists for the equation $\widetilde{w}(x_1,x_2,\cdots,x_r)=\overline{A}$, an equation in $\GL_n(k)$; indeed $\GL_n(\R)\subseteq \M_n(\R)$ and $A\in\GL_n(\R)$ iff $\overline{A}\in \GL_n(k)$.

\end{remark}

\printbibliography
\vspace{2em}
\end{document}